\DeclareMathAlphabet{\pazocal}{OMS}{zplm}{m}{n}
\newcommand*\dotp{\mathpalette\dotp@{.5}}
\newcommand*\dotp@[2]{\mathbin{\vcenter{\hbox{\scalebox{#2}{$\m@th#1\bullet$}}}}}
\newcommand*\bigcdot{\mathpalette\bigcdot@{.5}}
\newcommand*\bigcdot@[2]{\mathbin{\vcenter{\hbox{\scalebox{#2}{$\m@th#1\bullet$}}}}}
\newtheorem{theorem}{Theorem}[subsection]
\newtheorem{lemma}[theorem]{Lemma}  
\theoremstyle{definition}
\newtheorem*{conjecture*}{Conjecture}
\newtheorem*{remark*}{Remark}
\newtheorem{example}[theorem]{Example}
\numberwithin{equation}{subsection}
\newcommand{\isomto}{\overset{\sim}{\rightarrow}}
\title{ I\MakeLowercase{nfinitesimal} D\MakeLowercase{ilogarithm} S\MakeLowercase{atisfies} C\MakeLowercase{luster} I\MakeLowercase{dentities}}
\author{S\MakeLowercase{inan} \"{U}\MakeLowercase{nver}}
\address{Ko\c{c} University, Mathematics Department. Rumelifeneri Yolu, 34450, Istanbul, Turkey}
\email{sunver@ku.edu.tr}
 \subjclass[2020]{11G55, 13F60}
\begin{document}

\maketitle
\noindent

\begin{abstract}
In this paper, we show that the infinitesimal dilogarithm and Kontsevich's $1\frac{1}{2}$-logarithm function satisfies the identities which result from periods in cluster patterns. We also prove that these cluster identities are a consequence of the pentagon relation in the infinitesimal case.
\end{abstract}

\section{Introduction} 

The dilogarithm function, defined by the power series 
$$
{\rm Li}_{2}(z):=\sum _{1\leq n}\frac{z^n}{n^2}\;\;\;\;\;\; {\rm for }\; |z|<1,
$$
has a  history dating back to the 18th century. It has appeared more recently in the context of regulators in algebraic $K$-theory, volumes of hyperbolic manifolds, number theory, and mathematical physics \cite{zag}. It has an analytic continuation to the complex plane as a multi-valued function. It has single-valued versions: the Rogers dilogarithm ${\rm L}(z):={\rm Li}_{2}(z)+\frac{1}{2}\log(z)\log (1-z)$ and the Bloch-Wigner dilogarithm $D(z):=\mathfrak{Im}({\rm Li}_{2}(z))+\arg(1-z)\log|z|.$ These functions, and their higher-weight analogs, satisfy certain functional equations which are important in motivic cohomology; cf. \cite{geo}. The most famous of these equations in the case of the dilogarithm is the pentagon (or five-term) relation \cite{zag}: 
$$
D(x)-D(y)+D(y/x)-D((1-x^{-1})/(1-y^{-1}))+D((1-x)/(1-y))=0.
$$

There is another set of functional equations for the dilogarithm that come from cluster algebras. It is proven that there is a dilogarithm identity corresponding to each period in a cluster pattern by Chapoton \cite{cha}, Nakanishi \cite{nak-id} et al. For a detailed account of the history and references, as well as the proof, we refer the reader to the beautiful survey \cite{nak}.

An infinitesimal version of the dilogarithm was defined and studied in \cite{u1} (cf. \cite[\textsection 3]{inf-dilog}), and was shown to define a regulator from  algebraic $K$-theory. For a ring $A, $ let $A^{\times}$ denote the invertible elements in $A,$  $A^{\flat}:=\{a| a(1-a) \in A^{\times} \}$ and $A_{m}$ denote the truncated polynomial ring $A[t]/(t^m).$  For a field $k,$ of characteristic 0, and  $1<m<w<2m,$ we defined the infinitesimal dilogarithm  $\ell i _{m,w}$ as a function 
$$
\ell i _{m,w}:  k_{m} ^{\flat} \to k.
$$ 
When $k$ is a field of characteristic $p>2,$  we defined a characteristic $p$ version of the infinitesimal dilogarithm in \cite{uao}: 
$$
\ell i_{2} ^{(p)}:k_{2}^{\flat} \to k,
$$
by modifying Kontsevich's $1\frac{1}{2}$-logarithm \cite{kont}. These functions were used to construct infinitesimal invariants of cycles. We refer the reader to the survey \cite{survey} for an overview of these variants of the dilogarithm and the relevant literature.

In this paper, we prove that similar to the case of the ordinary dilogarithm above, the infinitesimal dilogarithms  $\ell i_{m,w}$ and $\ell i_{2} ^{(p)}$ satisfy the relations associated to a period in a cluster pattern. For $\ell i_{m,w}$ this is proved  in Theorem \ref{theorem-0-identity} of \textsection 2.1.1; and for $\ell i_{2} ^{(p)},$ or equivalently the Kontsevich dilogarithm, this is proved in Theorem \ref{theorem-p} of \textsection 2.2.

It is expected that  all the relations among the values of the dilogarithm whose arguments are  rational functions of several variables come from the pentagon relation  \cite{zag}. In the case of relations involving one variable, this was proven by Wojtkowiak \cite[Theorem 4.4]{wojt}. In  general, this is still an open question.  We prove that in the infinitesimal case, the pentagon relation in fact implies the cluster  relations above. In other words, if we have any function $f:k_{m} ^{\flat} \to k,$ which satisfies the pentagon relation, and assumes the value 0 on the constant elements, i.e. on $k^{\flat} \subseteq k_{m} ^{\flat},$ then $f$ satisfies the cluster  relations above.  This result is stated as Theorem \ref{inf-red-theorem} of \textsection 2.1.2. We call this the infinitesimal reduction theorem, in slight analogy with the classical case \cite{nak}. 

The reader might think that since we know the pentagon relation and the infinitesimal reduction theorem, it is unnecessary to prove the infinitesimal cluster relations. In fact the reverse holds true: we are using the infinitesimal cluster relations for the dilogarithm  to prove that they are consequences of the pentagon relation. The additional ingredient in the proof of the infinitesimal reduction theorem is the  main theorem of \cite{u1}, which expresses the homology of the infinitesimal part of the Bloch complex in terms of cyclic homology.

\section{Infinitesimal cluster relations}

\subsection{Inifinitesimal dilogarithm}

We review the theory of infinitesimal dilogarithm. The main references for the definitions and the results in this section are \cite[\textsection 3]{inf-dilog} and \cite{u1}.  In this section, let $k$ be a field of characteristic 0 and $k_{\infty}:=k[[t]]$ denote the ring of formal power series with coefficients in $k.$ For $u \in tk_{\infty},$ we have 
$$
e^{u}=\sum _{0\leq n} \frac{u^n}{n!} \;\;\;\;\;\;{\rm and}\;\;\;\;
\log (1+u)=\sum _{0<n} (-1)^{n+1}\frac{u^n}{n}.
$$  
The latter can be modified to obtain a homomorphism 
$
\log^\circ : k_\infty^\times \to k_\infty
$
defined as 
\[
\log^\circ(\alpha) := \log\left(\frac{\alpha}{\alpha(0)}\right).
\]
Here $\alpha(0) \in k$ denotes the constant coefficient of $\alpha.$ 
If \( q = \sum_{0 \le i} q_i t^i \in k_\infty \) and \( 1 \le a \), then $$
q|_a := \sum_{0 \le i < a} q_i t^i \in k_{\infty}
$$
denotes the truncation of $q$ to its first \( a \) terms, 
\[\frac{\partial q}{\partial t}:=\sum _{0\leq i} iq_{i}t^{i-1}\]
denotes 
the formal partial derivative of $q$ with respect to $t,$  and \( t_a(q) := q_a \) denote the coefficient of $t^a$ in $q.$

For $\alpha \in k_{\infty}^{\flat},$ there exist unique $s \in k^{\flat}$ and $u \in tk_{\infty}$ such that $\alpha =se^{u}.$  We defined the infinitesimal dilogarithm $\ell i _{m,w}:k_{\infty} \to k$ by the formula
\[
\ell i_{m,w}(se^u) := t_{w-1} \left( \log^\circ(1 - se^{u|_m}) \cdot \left. \frac{\partial u}{\partial t} \right|_{w - m} \right),
\]
for \(1< m < w < 2m, \) \cite[\textsection 3]{inf-dilog}, \cite{u1}, and showed that it factors through the canonical projection $k_{\infty} \to k_{m}$ to give a map \[\ell i_{m,w}:k_{m} ^{\flat} \to k,  \] which we denote by the same notation.   

\begin{example} There is only one dilogarithm of modulus $m=2$ and it is given by 
$$
\ell i _{2,3}(s+ut)= -\frac{u^3}{2s^2(s-1)^2},
$$
for $s \in k^{\flat}$ and $u \in k.$ 
There are two dilogarithms of modulus $m=3.$ The one of weight 4 is given by:  
$$
\ell i _{3,4}(s+u_1t+u_2t^2)=\frac{u_1 ^4}{3}\frac{2s-1}{(s-1)^3 s^3}-u_1^2u_2\frac{1}{(s-1)^2s^2}
$$
and the one of weight 5 by: 
$$
\ell i _{3,5}(s+u_1t+u_2t^2)=\frac{u_1^5}{4}\frac{(s-1)^3-s^3}{(s-1)^4s^4}-\frac{u_1^5}{3(s-1)^3s^3} +\frac{5}{3} u_1^3u_2\frac{2s-1}{(s-1)^3s^3}-\frac{5}{2}u_1u_2^2\frac{1}{(s-1)^2s^2},
$$
for $s \in k^{\flat}$ and $u_1, \,u_2 \in k.$
In general, for each $m \geq 2,$ there will be $m-1$ dilogarithms of modulus $m.$

Recall that for a local ring $A$ the Bloch group $B_{2}(A)$ is defined as the quotient of the free abelian group $\mathbb{Z}[A^{\flat}]$ on the symbols $[a],$ for $a \in A^{\flat},$ by subgroup generated by the pentagon relations
\begin{eqnarray}\label{pentagon}
[a]-[b]+[b/a]-[(1-a^{-1})/(1-b^{-1})]+[(1-a)/(1-b)],
\end{eqnarray}
for $a(1-a)b(1-b)(b-a) \in A^{\times}.$ The Bloch complex (of weight two) is defined as:
\[ 
\begin{CD}
B_{2}(A) @>{\delta}>> \Lambda ^{2} _{\mathbb{Z}}A^{\times},
\end{CD}
\]
with $\delta([a]):=(1-a)\wedge a,$ \cite{geo}.

We can rephrase $\ell i_{m,w}$ in terms of the differential in the Bloch complex as follows \cite[Proposition 3.0.1]{inf-dilog}. For $a \geq 1, $ let $\ell_{a}:k_{\infty}^{\times} \to k$ be the homomorphism given by 
\[
\ell _a:= t_{a} \circ \log ^{\circ}.
\]
We proved in \cite{u1}, that the map $g_{m,w}$  from    $B_{2}(k_{\infty})$ to $k$ which sends $[\tilde{\alpha}]$ to 
\begin{eqnarray}\label{dilog-formula}
 g_{m,w} ([\tilde{\alpha}]) =\sum _{1\leq i \leq w-m} i \cdot (\ell _{w-i} \wedge \ell _{i})(\delta(\tilde{\alpha}))   
\end{eqnarray}
has the property that $g_{m,w}([\tilde{\alpha} ])=g_{m,w}([\tilde{\beta} ]),$ if $\tilde{\alpha}|_{m}=\tilde{\beta}|_{m}.$ This implies that, in fact, $g_{m,w}([\tilde{\alpha}])=\ell i_{m,w}(\tilde{\alpha})$ and hence $\ell i _{m,w}$ induces a map 
\begin{eqnarray}\label{dilog-bloch}
\ell i_{m,w}:B_{2}(k_{m})\to k.
    \end{eqnarray}
  
In particular,  $\ell i_{m,w}$ satisfies the pentagon relation (\ref{pentagon}). The sum of the infinitesimal dilogarithms, \(\oplus _{m<w<2m} \ell i_{m,w}\) induce an isomorphism from the infinitesimal part of the $K$-group $K_{3}(k_{m})^{(2)} _{\mathbb{Q}}=(\ker{\delta})_{\mathbb{Q}}$ to $k^{\oplus (m-1)}.$ Here, for an abelian group  $V,$ we let $V_{\mathbb{Q}}:=V\otimes _{\mathbb{Z}}\mathbb{Q}.$ 

There is a natural action of $k^{\times}$ on $k_{m},$ which is obtained by scaling: \( \lambda \times f(t):=f(\lambda t),\) for $\lambda \in k^{\times},$ $f(t) \in k_{m}.$ This induces an action of $k^{\times}$ on $B_{2}(k_{m})$ by functoriality.  The dilogarithm $\ell i_{m,w}$ has $\times$-weight $w:$   
\begin{eqnarray}
\ell i_{m,w}(\lambda \times \alpha)=\lambda ^{w} \ell i_{m,w}(\alpha),    
\end{eqnarray}
for $\lambda \in k^{\times}$ and $\alpha \in B_{2}(k_{m}).$     
\end{example}

\subsubsection{Cluster identities for the infinitesimal dilogarithm.} In this section, we are in the set-up of \cite[\textsection 3]{nak}. We follow the notation there, with few differences. 

For  $1\leq n,$ let $\mathbb{T}_{n},$ the $n$-regular tree graph. Suppose that ${\bf \Sigma}=\{\Sigma_{t}=({\bf x}_t,{\bf y}_{t},B_{t} ) \}_{t \in \mathbb{T}_n}$ is cluster pattern of rank $n$ \cite[Definition 2.9]{nak}. Let ${\bf \Upsilon}=\{\Upsilon_t= ({\bf y}_{t},B_{t} )\}_{t\in \mathbb{T}_n}$ the associated $Y$-pattern of ${\bf \Sigma}.$ We choose an arbitrary initial vertex $t_{0} \in \mathbb{T}_{n}$ and  assume that ${\bf \Upsilon}$ is a free $Y$-pattern with free coefficients ${\bf y}_{t_0}={\bf y}=(y_{1}, \cdots, y_{n})$  at $t_{0}$   \cite[\textsection 2.5.2]{nak}.

For a fixed  $\nu \in S_{n},$ suppose that we have a  sequence 

\begin{eqnarray}\label{periodic-sequence}
\Upsilon[0] \xrightarrow{r_0} \Upsilon[1] \xrightarrow{r_1} \cdots \xrightarrow{r_{P-1}} \Upsilon[P],   
\end{eqnarray}
of mutations such that $\nu \Upsilon[0]=\Upsilon[P]$ \cite[\textsection 3.1]{nak}. Here, $r_{i}\in \{1,\cdots,n \}$  represents mutation in the direction of $r_i.$   Such a sequence of mutations is said to be $\nu$-periodic. We  assume that $\Upsilon[0]$ is the initial seed $\Upsilon_{t_0}.$ For $0\leq j<P$ and $1\leq i \leq n,$ let us denote the $i$-th component of the $y$-variable of $\Upsilon[j]$ by $y_i[j].$ Each $y_{i}[j]$ is a rational function of the initial variables $y_{i},$ $1\leq i\leq n,$ with coefficients in $\mathbb{Z}_{\geq 0}.$    Suppose that the diagonal matrix 
$$
\Theta = {\rm diag} (\theta_{1} ^{-1}, \cdots, \theta_{n} ^{-1} ) 
$$
with $\theta_{i} \in \mathbb{Z}_{>0}$ is a skew-symmetrizer for 
$B_{t_0}.$ Such a matrix exists, by the assumptions on a cluster pattern, but is not unique. This set-up gives a functional equation for the Rogers dilogarithm \cite[Theorem 3.5]{nak}, \cite[Theorem 6.1, Theorem 6.8]{nak-per}.

If $A$ is a ring and $\alpha_i \in A,$ for $1\leq i\leq n,$ we  denote by $\alpha _{i}[j]$ the value of the rational function $y_{i}[j]$ at the point $(\alpha_1,\cdots, \alpha_n),$ when this makes sense, i.e. the denominator of   $y_{i}[j]$ when evaluated at $(\alpha_1, \cdots, \alpha_n)$ is invertible in $A.$ 

\begin{lemma}\label{lemma}
Suppose that we are given a $\nu$-periodic sequence of mutations in a cluster pattern as in (\ref{periodic-sequence}). Let $k$ be a field with   ${\rm char}(k)\neq$2. There is a proper algebraic set $X \subseteq \mathbb{A}^{n}_{k}$ inside the $n$-dimensional affine space $\mathbb{A}^{n}_{k}$ over $k$ such that for ${\alpha}_{1}, \cdots, {\alpha} _{n} \in k_{\infty} $ with $({\alpha}_{1}(0), \cdots, {\alpha} _{n} (0)) \in (\mathbb{A}^n \setminus X)(k),$ we have 
$$
\sum_{0\leq j <P}\theta _{r_{j}}\cdot {\alpha} _{r_j}[j] \wedge (1 +{\alpha} _{r_j}[j])=0 \in \Lambda ^{2} k_{\infty} ^{\times}.   
 $$
 
\end{lemma}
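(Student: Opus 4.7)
The plan is to lift the desired identity to the universal setting, where $y_1,\ldots,y_n$ are treated as algebraically independent, and then specialize. In the notation of the excerpt, the Nakanishi periodicity theorem \cite[Proposition 3.13]{nak} (see also \cite[Theorem 6.1]{nak-per}) provides
\begin{equation*}
\sum_{0\leq j<P}\theta_{r_j}\cdot y_{r_j}[j]\wedge(1+y_{r_j}[j])=0 \quad \text{in}\quad \Lambda^{2}\mathbb{Q}(y_1,\ldots,y_n)^{\times}.
\end{equation*}
It remains to produce $X$ and a specialization map $y_i\mapsto\alpha_i$ that transports this identity into $\Lambda^2 k_\infty^\times$.

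The construction of $X$ uses the fact recorded in the excerpt that each $y_i[j]$ is a rational function in $y_1,\ldots,y_n$ with coefficients in $\mathbb{Z}_{\geq 0}$. Accordingly, write $y_{r_j}[j]=N_j/D_j$ with coprime $N_j,D_j\in\mathbb{Z}_{\geq 0}[y_1,\ldots,y_n]$, and set $E_j:=N_j+D_j$, so that $1+y_{r_j}[j]=E_j/D_j$. Define $X\subseteq \mathbb{A}^{n}_{k}$ to be the zero locus of the product $\prod_{0\leq j<P}N_jD_jE_j$; as a finite union of hypersurfaces, it is a proper algebraic set.

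Now fix $\alpha_1,\ldots,\alpha_n\in k_\infty$ with $(\alpha_1(0),\ldots,\alpha_n(0))\in(\mathbb{A}^n\setminus X)(k)$. Each of $N_j(\alpha_1,\ldots,\alpha_n)$, $D_j(\alpha_1,\ldots,\alpha_n)$ and $E_j(\alpha_1,\ldots,\alpha_n)$ in $k_\infty$ has nonzero constant coefficient, hence is a unit in the local ring $k_\infty=k[[t]]$. The substitution $y_i\mapsto\alpha_i$ therefore extends uniquely to a ring homomorphism $\phi\colon S\to k_\infty$, where $S:=\mathbb{Z}[y_1,\ldots,y_n]\bigl[\bigl(\prod_{j}N_jD_jE_j\bigr)^{-1}\bigr]$. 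Since $y_{r_j}[j]$ and $1+y_{r_j}[j]$ lie in $S^\times$, the universal identity lies in $\Lambda^{2} S^\times$, and applying $\Lambda^{2}$ of the induced group homomorphism $S^\times\to k_\infty^\times$ yields the claim.

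I anticipate no conceptual obstacle. The entire argument is a specialization of Nakanishi's universal identity; the sole role of $X$ is to guarantee that every polynomial evaluation appearing in $y_{r_j}[j]\wedge(1+y_{r_j}[j])$ produces a unit of the local ring $k_\infty$, which by locality reduces to the nonvanishing of finitely many constant coefficients. The subtraction-free positivity of cluster $y$-variables is what supplies the concrete polynomials $N_j$, $D_j$, $E_j$ cutting out $X$.
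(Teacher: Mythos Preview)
Your approach is essentially the paper's: specialize Nakanishi's universal identity through a ring homomorphism from a localization of the polynomial ring into $k_\infty$, where the algebraic set $X$ is chosen so that all the relevant factors become units. The paper cuts out $X$ by the product of the $F$-polynomials and the tropicalized $y$-variables, you by the numerators and denominators $N_j,D_j,E_j$; these give the same (or comparable) $X$.

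There is one small gap. You cite Nakanishi's identity as holding in $\Lambda^{2}\mathbb{Q}(y_1,\ldots,y_n)^{\times}$, and then write that ``the universal identity lies in $\Lambda^{2}S^{\times}$'' because $y_{r_j}[j]$ and $1+y_{r_j}[j]$ lie in $S^{\times}$. But that only shows the left-hand side is an \emph{element} of $\Lambda^{2}S^{\times}$; it does not show it is \emph{zero} there, since $\Lambda^{2}$ of an injection of abelian groups need not be injective. The paper is careful on exactly this point: it invokes the \emph{proof} of \cite[Proposition~3.13]{nak} to assert that the identity already holds in $\Lambda^{2}k[y_1,\ldots,y_n]_Q^{\times}$, i.e.\ before passing to the fraction field. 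You can also patch your version directly: since $\mathbb{Z}[y_1,\ldots,y_n]$ is a UFD, $S^{\times}$ is $\{\pm 1\}$ times the free abelian group on the irreducible factors of $\prod_j N_jD_jE_j$, which is a direct summand of $\mathbb{Q}(y_1,\ldots,y_n)^{\times}$; hence $\Lambda^{2}S^{\times}\hookrightarrow\Lambda^{2}\mathbb{Q}(y_1,\ldots,y_n)^{\times}$ is injective and the vanishing descends. Either fix completes the argument.
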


\begin{proof}
We continue with the notation of \cite[\textsection 3]{nak}. 
Let $Q(y_{1},\cdots, y_{n})$ denote the product of all the 
all the $F$-polynomials $F_{i,j}$ and the tropicalizations $[y_{i}[j]]$ of $y_{i}[j],$  for $1 \leq i \leq n$ and $0 \leq j <P.$ The proof of 
\cite[Proposition 3.13]{nak} (cf. \cite[Proposition 6.7]{nak-per}) shows that  
\begin{align}\label{constancy-eq}
 \sum _{0\leq j<P}\theta_{r_j}\cdot y_{r_j}[j]\wedge (1+y_{r_j}[j])=0   
\end{align}
in $\Lambda ^2 k[y_1,\cdots,y_n]_{Q} ^{\times}.$  Here $k[y_1,\cdots,y_n]_{Q}$ denotes the localization of $k[y_1,\cdots,y_n]$ at $Q(y_{1},\cdots,y_n)$ which is obtained by inverting $Q(y_{1}, \cdots, y_{n}).$  Let $X$ be the algebraic set defined by $Q(y_{1},\cdots,y_{n}).$ If  ${\alpha}_{1}, \cdots, {\alpha}_{n} \in  k_{\infty}$ has the property that $({\alpha}_{1}(0), \cdots, {\alpha} _{n} (0)) \in (\mathbb{A}^n \setminus X)(k),$ then we have $Q({\alpha}_{1}(0), \cdots, {\alpha} _{n} (0)) \in k^{\times }.$ This, in turn implies that $Q({\alpha}_{1}, \cdots, {\alpha} _{n} ) \in k_{\infty}^{\times }.$ The $k$-homomorphism from $k[y_{1}, \cdots, y_{n}]$ to $k_{\infty}$ that sends $y_{i}$ to ${\alpha}_{i},$ for $1 \leq i \leq n,$ induces a map $\varphi$ from $k[y_{1},\cdots, y_{n}]_{Q}$ to $k_{\infty}.$
Applying $\varphi$ to (\ref{constancy-eq}) gives the identity in the statement of the lemma. 
 \end{proof}
The following is the precise analog, for the infinitesimal dilogarithm,  of the cluster relations for the Rogers dilogarithm. 

\begin{theorem}\label{theorem-0-identity} Suppose that we are given a $\nu$-periodic sequence of mutations in a cluster pattern as in (\ref{periodic-sequence}). Let  $k$ be a field with ${\rm char}(k)=0,$   and  $\alpha_i \in k_{m} ,$ for $1\leq i \leq n,$ such that   for every $0 \leq j <P,$ the corresponding $\alpha _{r_j}[j]$ has the property that $-\alpha_{r_j}[j]  \in k_{m} ^{\flat}.$  Then  we have  
$$
\sum _{0\leq j <P} \theta_{r_j}\cdot \ell i _{m,w}(-\alpha_{r_j}[j])=0.
 $$

\end{theorem}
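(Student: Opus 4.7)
The plan is to lift from $k_m$ to $k_\infty$, apply Lemma \ref{lemma}, convert the resulting wedge identity into a dilogarithm identity via formula (\ref{dilog-formula}), and then remove the genericity assumption of the lemma by Zariski density.

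I would first choose lifts $\tilde\alpha_i\in k_\infty$ of $\alpha_i\in k_m$ with $\tilde\alpha_i(0)=\alpha_i(0)$, and temporarily assume the genericity hypothesis $(\alpha_1(0),\ldots,\alpha_n(0))\notin X(k)$ of Lemma \ref{lemma}. Under this assumption,
\[
\sum_{0\leq j<P}\theta_{r_j}\cdot\tilde\alpha_{r_j}[j]\wedge(1+\tilde\alpha_{r_j}[j])=0\quad\text{in }\Lambda^2 k_\infty^\times.
\]
To bridge this to the dilogarithm, I would use that
\[
\delta([-\tilde\alpha_{r_j}[j]])=(1+\tilde\alpha_{r_j}[j])\wedge(-\tilde\alpha_{r_j}[j])=(1+\tilde\alpha_{r_j}[j])\wedge\tilde\alpha_{r_j}[j]+(1+\tilde\alpha_{r_j}[j])\wedge(-1).
\]
The constant $-1\in k^\times$ satisfies $\log^\circ(-1)=0$, so $\ell_a(-1)=0$ for every $a\geq 1$, and consequently each bilinear form $\ell_{w-i}\wedge\ell_i$ annihilates the ``$-1$'' term. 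Applying formula (\ref{dilog-formula}) and summing with weights $\theta_{r_j}$,
\[
\sum_{j}\theta_{r_j}\ell i _{m,w}(-\tilde\alpha_{r_j}[j])=\sum_{1\leq i\leq w-m}i\cdot(\ell_{w-i}\wedge\ell_i)\Bigl(\sum_{j}\theta_{r_j}\cdot(1+\tilde\alpha_{r_j}[j])\wedge\tilde\alpha_{r_j}[j]\Bigr),
\]
and the inner sum vanishes by the lemma together with antisymmetry of $\wedge$. Since $\ell i _{m,w}$ factors through $k_m$ and $\tilde\alpha_{r_j}[j]$ reduces to $\alpha_{r_j}[j]$ in $k_m$, this proves the claim under the extra genericity hypothesis.

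To drop that hypothesis, I would invoke Zariski density. The left-hand side of the stated identity is a regular function on the open subvariety $U\subseteq \mathbb{A}^{nm}_k$ parametrizing tuples $(\alpha_1,\ldots,\alpha_n)\in k_m^n$ with $-\alpha_{r_j}[j]\in k_m^\flat$ for every $j$; the previous step shows it vanishes on $U\cap V$, where $V$ is the preimage of $\mathbb{A}^n\setminus X$ under the constant-term map $k_m^n\to k^n$. Since $k$ is infinite (as $\mathrm{char}(k)=0$) and $X$ is a proper subvariety of $\mathbb{A}^n$, both $U$ and $V$ are nonempty opens in the irreducible space $\mathbb{A}^{nm}_k$, so $U\cap V$ is dense in $U$ and the identity extends to all of $U$. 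The main obstacle is essentially bookkeeping: correctly tracking the sign $\tilde\alpha\leftrightarrow -\tilde\alpha$ and confirming that the unwanted ``$-1$'' wedge factor is annihilated by each $\ell_{w-i}\wedge\ell_i$; once that is in place, the density step is routine in characteristic zero.
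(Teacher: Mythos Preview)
Your proof is correct and follows essentially the same route as the paper: lift to $k_\infty$, invoke Lemma~\ref{lemma} to kill the wedge sum, feed this into the expression (\ref{dilog-formula}) for $\ell i_{m,w}$ (handling the harmless $-1$ factor exactly as the paper does in deriving (\ref{lifting-indentity-0})), and then remove the genericity constraint by a Zariski-density argument. The only cosmetic difference is that the paper packages the left-hand side as a rational function over~$\mathbb{Q}$ and passes to the algebraic closure $\overline{k}$ for the density step, whereas you work directly over $k$ using that a characteristic-zero field is infinite; both versions are valid.
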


\begin{proof}  

Let $R$ be the polynomial ring over  $\mathbb{Q}$ generated by the indeterminates $x _{i,e},$ with $1 \leq i \leq n$ and $0\leq e <m,$ let $F$ be the field of fractions of $R$   and $x_i := \sum _{0 \leq e <m} x _{i,e}t^e \in F_m.$ Applying the mutations above appearing in the $\nu$-periodic sequence, we obtain $x_{i}[j] \in F_{m},$ for $1\leq i\leq n$ and $0\leq j<P.$  In order to ease the notation, we put $y_{i}[j]:=x _{i}[j](0)$ and $y_i:=y_{i}[0].$

Let us also put
$$
f(x_{i,e})_{{1\leq i \leq n} \atop {0\leq e <m}} :=   \sum _{0\leq j <P} \theta_{r_j}\cdot \ell i _{m,w}(-x_{r_j}[j]) \in F.
$$
Notice that $f$ is a rational function in the variables $x_{i,e}$ with coefficients in $\mathbb{Q}.$  From the definition of $\ell i_{m,w},$ we see that $f$  has poles only along some irreducible polynomials in $\mathbb{Q}[y_1,\cdots, y_n] \subseteq R.$ 
If $f$ has a pole along an irreducible polynomial $p(y_1,\cdots,y_n)$ in $\mathbb{Q}[y_1,\cdots, y_n]$ then 
there is a $j$  such that the valuation of $y_{r_j}[j]$ or $1+y_{r_j}[j]$ at $p(y_1,\cdots, y_n)$ is non-zero. Denote by $Y$ the algebraic subset of $\mathbb{A}^{n}_{\mathbb{Q}}$ defined by the product of those irreducible polynomials $p(y_{1},\cdots, y_{n})$  such that there is a $j$  with the property that the valuation of $y_{r_j}[j]$ or $1+y_{r_j}[j]$ at $p(y_1,\cdots, y_n)$ is non-zero. We then  have $Y \subseteq X \subseteq \mathbb{A}^{n} _{\mathbb{Q}},$ with $X$ as in Lemma \ref{lemma}. Note that for $\alpha _{i} \in k_{m},$ with $1 \leq i \leq n,$ the condition that $-\alpha _{r_j}[j] \in k_{m} ^{\flat}$ for every $0\leq j<P$ is equivalent to  $(\alpha_{1}(0), \cdots, \alpha _{n}(0) ) \in (\mathbb{A}^{n}_{\mathbb{Q}} \setminus Y)(k)$ and in this case, 
\begin{align}\label{rational-eqn}
  f(\alpha_{i,e})_{{1\leq i \leq n} \atop {0\leq e <m}} =   \sum _{0\leq j <P} \theta_{r_j}\cdot \ell i _{m,w}(-\alpha_{r_j}[j]) \in k,  
\end{align}
where we put $\alpha_{i}=\sum _{0\leq e<m}\alpha_{i,e}t^e.$

Let $\overline{k}$ denote an algebraic closure of $k.$  For $\beta _{i} \in \overline{k}_m$ such that $(\beta_1(0),\cdots, \beta_{n}(0)) \in (\mathbb{A} ^n \setminus X)(\overline{k}),$ we choose  $\tilde{\beta}_{i} \in \overline{k}_{\infty} $ which reduce to  $\beta _i$ modulo $(t^m).$ By  Lemma \ref{lemma}, we have
\begin{eqnarray}\label{theta-eqn}
  \sum_{0\leq j <P}\theta _{r_{j}}\cdot (1+\tilde{\beta} _{r_j}[j]) \wedge \tilde{\beta} _{r_j}[j]=0.     
\end{eqnarray}

For any  $\beta \in \overline{k}_m ^{\flat}$ and any $\tilde{\beta} \in \overline{k}_{\infty} ^{\flat},$ which  reduces to  $\beta $ modulo $(t^m),$ we have, by (\ref{dilog-formula}),
\begin{align}\label{lifting-indentity-0}
\ell i _{m,w}(-\beta)=\sum_{1 \leq i \leq w - m} i \cdot (\ell_{w - i} \land \ell_i)((1+\tilde{\beta}) \wedge \tilde{\beta}).    
\end{align}
This implies that  
$$
\sum _{0\leq j <P} \theta_{r_j}\ell i _{m,w}(-\beta_{r_j}[j])=\sum _{0\leq j <P} \sum _{1\leq i \leq w-m}\theta_{r_j} i \cdot (\ell_{w - i} \land \ell_i)((1+\tilde{\beta}_{r_j}[j]) \wedge \tilde{\beta}_{r_j}[j]).
$$
The right hand side can be rewritten as 
$$
 \sum _{1\leq i \leq w-m}i \cdot (\ell_{w - i} \land \ell_i)\Big ( \sum _{0\leq j <P}\theta_{r_j}\cdot ((1+\tilde{\beta}_{r_j}[j]) \wedge \tilde{\beta}_{r_j}[j])\Big).
$$
We have shown above that the sum in parentheses is equal to 0 in (\ref{theta-eqn}). By (\ref{rational-eqn}), this implies that $f(\beta_{i,e})_{{1\leq i \leq n} \atop {0\leq e <m}} =0,$ for $\beta_{i} \in \overline{k}_{m}$ such that $(\beta_1(0),\cdots, \beta_{n}(0)) \in (\mathbb{A} ^n \setminus X)(\overline{k}).$ Since $\overline{k}$ is algebraically closed and  $f$ is a rational function that does not have poles along $Y, $ this implies that $f(\beta_{i,e})_{{1\leq i \leq n} \atop {0\leq e <m}} =0,$ for $\beta_{i} \in \overline{k}_{m}$ with $(\beta_1(0),\cdots, \beta_{n}(0)) \in (\mathbb{A} ^n \setminus Y)(\overline{k}).$ Using (\ref{rational-eqn}) shows that 
$$
\sum _{0\leq j <P} \theta_{r_j}\cdot \ell i _{m,w}(-\beta_{r_j}[j])=0,
$$
if $(\beta_{1}(0),\cdots, \beta_{n}(0)) \in (\mathbb{A}^{n}\setminus Y)(\overline{k}).$ Since $k\subseteq \overline{k},$ we have the statement in the theorem. 
\end{proof}

\subsubsection{Inifinitesimal reduction problem }


We will now show that every infinitesimal cluster relation  in fact comes from the pentagon relation. More precisely, we have the following theorem. 

\begin{theorem}\label{inf-red-theorem}
Let $f:k_{m} ^{\flat} \to k$  be a function with the following properties:

(i) $f(s)=0,$ if $s \in k^{\flat} \subseteq k_{m} ^{\flat}.$

(ii) $f$ satisfies the pentagon relation (\ref{pentagon}).

\noindent Then $f$ satisfies the infinitesimal cluster relations corresponding to any $\nu$-periodic sequence of mutations in a cluster pattern as in (\ref{periodic-sequence}). More precisely, for $\alpha_{i} \in k_{m},$ $1\leq i \leq n,$ such that $(\alpha_{1}(0), \cdots, \alpha _{n} (0)) \in (\mathbb{A}^n \setminus X)(k),$ we have 
 \begin{align}\label{general-formula}
 \sum _{0\leq j <P} \theta_{r_j}\cdot f(-\alpha_{r_j}[j])=0.
 \end{align}
\end{theorem}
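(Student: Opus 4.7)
The plan is to use hypotheses (i) and (ii) to factor $f$ through the infinitesimal part of the Bloch group, and then deduce that the cluster element becomes zero there by combining the main theorem of \cite{u1} with Theorem \ref{theorem-0-identity}. Set $C := \sum_{0 \leq j < P} \theta_{r_j}[-\alpha_{r_j}[j]] \in \mathbb{Z}[k_m^\flat]$, and let $\tilde f$ denote the $\mathbb{Z}$-linear extension of $f$ to $\mathbb{Z}[k_m^\flat]$; the identity (\ref{general-formula}) says exactly $\tilde f(C) = 0$.

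Condition (ii) makes $\tilde f$ vanish on all pentagon relations, so it factors to $\tilde f : B_2(k_m) \to k$. Since the constant inclusion $k \hookrightarrow k_m$ is split by $t \mapsto 0$, there is a direct sum decomposition $B_2(k_m) = B_2(k) \oplus B_2(k_m)^{\mathrm{inf}}$, where $B_2(k_m)^{\mathrm{inf}} := \ker(B_2(k_m) \to B_2(k))$. Condition (i) then forces $\tilde f$ to vanish on the $B_2(k)$-summand, so it restricts to a homomorphism $\bar f : B_2(k_m)^{\mathrm{inf}} \to k$. Writing the class of $C$ in $B_2(k_m)$ as $C_0 + \bar C$ with $C_0 \in B_2(k)$ and $\bar C \in B_2(k_m)^{\mathrm{inf}}$, it suffices to show $\bar f(\bar C) = 0$.

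Next I would show that $\bar C \otimes 1 \in (\ker \delta)_\mathbb{Q}$. From $\delta([-a]) = (1+a)\wedge(-a) = (1+a)\wedge(-1) + (1+a)\wedge a$ together with Lemma \ref{lemma}, which makes the second contribution vanish, I obtain
$$
\delta(C) = \sum_{0 \leq j < P} \theta_{r_j}\,(1+\alpha_{r_j}[j]) \wedge (-1) \ \in \ \Lambda^2 k_m^\times.
$$
Each summand is 2-torsion because $(-1)^2 = 1$, so $\delta(C) \otimes 1 = 0$ in $\Lambda^2 k_m^\times \otimes \mathbb{Q}$, and projecting into the infinitesimal summand gives $\bar C \otimes 1 \in (\ker \delta)_\mathbb{Q}$. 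The main theorem of \cite{u1} identifies this group with $k^{\oplus(m-1)}$ via $\bigoplus_{m<w<2m} \ell i_{m,w}$, and by Theorem \ref{theorem-0-identity} each $\ell i_{m,w}$ kills the image of $\bar C$. Hence $\bar C \otimes 1 = 0$, so $\bar C$ is torsion in $B_2(k_m)^{\mathrm{inf}}$; since ${\rm char}(k) = 0$, we conclude $\bar f(\bar C) = 0$, finishing the proof.

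The main obstacle lies in the third step: because of the sign convention $-\alpha_{r_j}[j]$, the boundary $\delta(C)$ is not zero on the nose but only up to the 2-torsion piece $\cdot \wedge (-1)$. This is precisely why the argument requires rationalizing and invoking the $\mathbb{Q}$-form of the main theorem of \cite{u1}; the torsion-freeness of $k$ then propagates the identity $\bar C \otimes 1 = 0$ back to the integral equality $\bar f(\bar C) = 0$.
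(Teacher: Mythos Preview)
Your proof is correct and follows essentially the same approach as the paper: both use condition (ii) to pass to $B_2(k_m)$, condition (i) to kill the constant summand, Lemma~\ref{lemma} to see that $\delta(C)$ is torsion, and then the combination of Theorem~\ref{theorem-0-identity} with the main theorem of \cite{u1} to finish. The only cosmetic difference is that the paper packages the last step by building an auxiliary $\hat g=\sum g_w\circ \ell i_{m,w}$ agreeing with $\hat f$ on $\ker(\delta)^\circ$ and observing that $\hat f-\hat g$ factors through $\delta$, whereas you argue directly that $\bar C$ is torsion in $B_2(k_m)^{\mathrm{inf}}$; these are equivalent reformulations of the same idea.
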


\begin{proof}

If $f$ is as in the statement of the theorem  then $f$ induces a map from the Bloch group $B_{2}(k_{m})$ to $k,$ 
we denote this homomorphism by $\hat{f}.$  Let $\ker (\delta)^{\circ}$ denote the infinitesimal part of $\ker (\delta).$ In other words, if $q:B_{2}(k_{m}) \to B_{2}(k)$ is the  map induced by the canonical projection $k_{m} \to k,$ then $\ker (\delta)^{\circ}:=\ker (\delta)\cap \ker(q).$  

There are idempotents 
$$
\pi_{w} : \ker (\delta)^{\circ} \to \ker (\delta) ^{\circ} 
$$
for $m<w<2m,$ which induce a decomposition 
$$
\ker (\delta) ^{\circ} = \oplus_{m<w<2m} \pi _{w} (\ker (\delta)^{\circ}), 
$$  
cf. \cite[\textsection 4.1.1]{u1}. 
Here $\pi _{w} (\ker (\delta)^{\circ})=\{u \in  \ker (\delta)^{\circ}| \lambda \times u = \lambda ^w u, {\rm \; for \; } \lambda \in \mathbb{Z} \}$   is the weight $w$ part of the $\times$-action, cf. \textsection 2.1 and \cite[Notation 2.0.1]{u1}. The  restriction of $\ell i _{m,w}$  to $\pi _{w} (\ker (\delta)^{\circ}) $ induces a (group) isomorphism $$ 
\gamma_{w}:=\ell i _{m,w}|_{\pi _{w} (\ker (\delta)^{\circ})}:\pi _{w} (\ker (\delta)^{\circ}) \isomto k,
$$
by \cite[Theorem 1.3.1]{u1} and \cite[Theorem 1.3.2]{u1}. 
Let $g_{w}:= \hat{f}|_{\pi _{w} (\ker (\delta)^{\circ})}\circ\gamma_{w}^{-1},$ and 
\[\hat{g}:=\sum_{m <w <2m} g_{w} \circ \ell i_{m,w}:B_{2}(k_{m}) \to k. \] 

We proved in Theorem \ref{theorem-0-identity} that $\ell i _{m,w},$ for $m<w<2m$ satisfy the  identity (\ref{general-formula}). Since $g_{w}$ are group homomorphisms and the identity (\ref{general-formula}) is linear,  we see that $\hat{g}$ also satisfies this identity. Note that  $\hat{f}$ and $\hat{g}$ agree on $(\ker{\delta})^{\circ}$ by construction.  Furthermore, $\hat{f}$ and $\hat{g}$ are both zero on $B_{2}(k)\subseteq B_{2}(k_m), $ the first one by the assumption (i) above, the second one because of the fact that $\ell i _{m,w}$ vanish on the same subgroup. This implies that  $\hat{f}$ and $\hat{g}$ agree on $\ker{\delta}$ and hence $\hat{f}-\hat{g}$ factors through the projection $\delta$ from $B_{2}(k_{m})$ to $\delta(B_{2}(k_{m})) \subseteq \Lambda ^{2} k_{m} ^{\times}.$ Given $\alpha _{i}$ as in the statement of the theorem, by Lemma \ref{lemma}, we have 
$$
\delta(\sum_{0\leq j<P}\theta_{r_j}[-\alpha_{r_j}(j)])=0
$$ 
in $(\Lambda ^{2}k_{m} ^{\times})_{\mathbb{Q}}.$
Therefore, $\hat{f}-\hat{g}$ also satisfies the identity (\ref{general-formula}). This, in turn, implies that $\hat{f}=\hat{g}+(\hat{f}-\hat{g})$ satisfies the identity  (\ref{general-formula}) as well. 
 
\end{proof}

\subsection{Cluster relations for Kontsevich $1\frac{1}{2}$-logarithm } Let $p$ be an odd prime and $R$ a ring of characteristic $p.$ For $s \in R,$ let 
\[
\pounds_1(s) = \sum_{1 \leq i <p } \frac{s^i}{i},
\]
denote Kontsevich's $1\frac{1}{2}$-logarithm as defined in \cite[Definition 4.1]{gang} and \cite{kont}. For $y=s+\alpha t \in R_{2},$ we put $\underline{y}:=s$ and $\overline{y}:=\frac{\alpha}{s(1-s)}.$ Then using the notation of \cite[\textsection 3]{uao}, we have 
$\ell i _2^{(p)}(y)=\overline{y}^p \pounds_1(\underline{y}).$ In \cite{uao}, we showed   that $\ell i _2^{(p)}$ is the  component of  a regulator from $K_{3}(R_2),$ when $R$ is a local ring. Analogously to the maps $\ell _i$  in \textsection 2.1 for characteristic 0, we have the  maps $\ell _i: R_{\infty} ^{\times } \to R,$ for $i<p,$ in characteristic $p.$  This is because the first $p$ terms of the power series expansion of $\log(1+x)$ does not involve $p$ in the denominator. Using these maps,  $\ell i_{2}^{(p)}$ can  be expressed in terms of the differential in the Bloch complex \cite[\textsection 3]{chow-kont}: 
\begin{align}\label{pound-delta}
\ell i_{2} ^{(p)}=(\frac{1}{2} \sum _{1 \leq i <p }i\cdot \ell _{p-i} \wedge \ell _{i}) \circ \delta. 
\end{align}

\begin{theorem}\label{theorem-p} Suppose that we are given a $\nu$-periodic sequence of mutations in a cluster pattern as in (\ref{periodic-sequence}). Let $k$ be a field of characteristic $p>2.$ For $\alpha_{i} \in k_{2} ,$ $1\leq i \leq n,$ with the property that $-\alpha _{r_{j}}[j] \in k_{2}^{\flat},$ for all $0\leq j <P,$  we have 
$$
\sum_{0\leq j<P}\theta_{r_j}\cdot \ell i^{(p)} _{2}(-\alpha _{r_j}[j])=0.
$$ If we 
 put $\beta_{j}:=-\alpha_{r_j}[j],$ this can be rephrased as 
$$
\sum_{0\leq j<P}\theta_{r_j} \cdot\overline{\beta}_j^p \pounds_{1}(\underline{\beta}_j
)=0.
$$
\end{theorem}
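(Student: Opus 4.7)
The plan is to transport the proof of Theorem \ref{theorem-0-identity} to characteristic $p$, with formula (\ref{pound-delta}) replacing (\ref{dilog-formula}) as the bridge to the Bloch complex differential. First I set up the generic situation: let $R := \mathbb{F}_p[x_{i,e}]_{1\leq i\leq n,\,0\leq e<2}$ with fraction field $F$, put $x_i := x_{i,0} + x_{i,1}t \in F_2$ and $y_i := x_{i,0}$, run the $\nu$-periodic mutation sequence on the $x_i$ to obtain $x_i[j] \in F_2$, and consider
$$f := \sum_{0\leq j<P} \theta_{r_j}\cdot \ell i_2^{(p)}(-x_{r_j}[j]) \in F.$$
Since $\ell i_2^{(p)}(y) = \overline{y}^p\pounds_1(\underline{y})$ has denominators only in $\underline{y}(1-\underline{y})$, the poles of $f$ lie along the algebraic subset $Y \subseteq \mathbb{A}^n_{\mathbb{F}_p}$ defined by those irreducible polynomials $p(y_1,\ldots,y_n)$ at which some $y_{r_j}[j]$ or $1+y_{r_j}[j]$ has non-trivial valuation. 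Just as in Theorem \ref{theorem-0-identity}, one has $Y \subseteq X$, where $X$ is the exceptional set produced by Lemma \ref{lemma} (which applies here since $\mathrm{char}(k)=p \neq 2$).

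Next, I fix an algebraic closure $\overline{k}$ of $k$ and let $\beta_i \in \overline{k}_2$ satisfy $(\beta_1(0),\ldots,\beta_n(0)) \in (\mathbb{A}^n\setminus X)(\overline{k})$. I choose arbitrary lifts $\tilde\beta_i \in \overline{k}_\infty$ of the $\beta_i$; the genericity condition ensures each $\tilde\beta_{r_j}[j]$ is defined in $\overline{k}_\infty$ with $-\tilde\beta_{r_j}[j] \in \overline{k}_\infty^\flat$. Lemma \ref{lemma} then yields
$$\sum_{0\leq j<P}\theta_{r_j}\cdot (1+\tilde\beta_{r_j}[j])\wedge \tilde\beta_{r_j}[j] = 0 \in \Lambda^2 \overline{k}_\infty^\times.$$
Applying (\ref{pound-delta}) term by term gives
$$\ell i_2^{(p)}(-\beta_{r_j}[j]) = \tfrac{1}{2}\sum_{1\leq i<p} i\cdot (\ell_{p-i}\wedge \ell_i)\bigl((1+\tilde\beta_{r_j}[j])\wedge \tilde\beta_{r_j}[j]\bigr),$$
using that the maps $\ell_i$ for $i<p$ are well-defined in characteristic $p$, as recalled in the excerpt. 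Summing over $j$ and pulling the sum inside the $\mathbb{Z}$-linear operator $\tfrac{1}{2}\sum_{1\leq i<p} i\cdot \ell_{p-i}\wedge \ell_i$ then gives $\sum_{0\leq j<P}\theta_{r_j}\cdot \ell i_2^{(p)}(-\beta_{r_j}[j]) = 0$ for all such $\beta_i$.

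This shows $f$ vanishes on the Zariski-dense open subset $(\mathbb{A}^n \setminus X)(\overline{k})$. Since the poles of $f$ are contained in $Y \subseteq X$ and $\overline{k}$ is algebraically closed, $f$ in fact vanishes on all of $(\mathbb{A}^n \setminus Y)(\overline{k})$, and hence on $(\mathbb{A}^n \setminus Y)(k)$. The hypothesis $-\alpha_{r_j}[j] \in k_2^\flat$ for all $j$ is equivalent to $(\alpha_1(0),\ldots,\alpha_n(0)) \in (\mathbb{A}^n \setminus Y)(k)$, so evaluating $f$ at the $\alpha_i$ yields the theorem. The only non-formal ingredient is the pole-locus claim for $f$, and that is an immediate consequence of the explicit form of $\ell i_2^{(p)}$; everything else is a direct transcription of the argument for Theorem \ref{theorem-0-identity}.
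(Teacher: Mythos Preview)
Your proposal is correct and follows exactly the approach the paper indicates: the paper's proof simply says the argument is ``entirely analogous'' to that of Theorem \ref{theorem-0-identity}, with (\ref{pound-delta}) replacing (\ref{lifting-indentity-0}), and you have carried out precisely that transcription. The only cosmetic slip is calling $\tfrac{1}{2}\sum_{1\leq i<p} i\cdot \ell_{p-i}\wedge \ell_i$ a ``$\mathbb{Z}$-linear operator''; it is additive (and $\tfrac{1}{2}$ makes sense since $p>2$), which is all you need.
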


\begin{proof}
The proof of this theorem is entirely analogous to that of Theorem \ref{theorem-0-identity}.  Here we use the identity (\ref{pound-delta}) which expresses $\ell i_{2} ^{(p)}$ in terms of a lifting,  instead of the use of (\ref{lifting-indentity-0}) in the proof of Theorem \ref{theorem-0-identity}. The details are omitted.  
\end{proof}

\begin{example} 
(a) The fact that  mutations are involutive gives us the most basic   relation $\ell i _2^{(p)}(y_1^{-1})+\ell i _2^{(p)}(y_1)=0,$ for $y_{1} \in k_{2} ^{\flat}.$   

The periodic set of mutations for an  $A_2$-type cluster algebra given in \cite[Example 3.8]{nak} gives us the functional equation: 
\[
\begin{aligned}
\ell i_{2} ^{(p)}(y_1)+ \ell i_{2} ^{(p)}(y_2(1-y_1))+\ell i_{2} ^{(p)}(y_1^{-1}(1-y_2+y_1y_2))+\ell i_{2} ^{(p)}(y_{1} ^{-1}(1-y_2^{-1})) +\ell i_{2} ^{(p)}(y_2^{-1})=0,
\end{aligned}
\]
by replacing $y_i$ with $-y_i$ in the set of equations in \cite[\textsection 3.3 (3.18)]{nak}.  If we further put $y_{1}=1-x$ and $y_{2}=y/x$ and use the elementary relation  that $\ell i _2^{(p)}(1-z)+\ell i _2^{(p)}(z)=0,$ the above relation can be rewritten as 
\[
\begin{aligned}
\ell i_{2} ^{(p)}(x)- \ell i_{2} ^{(p)}(y)+\ell i_{2} ^{(p)}(\frac{y}{x})-\ell i_{2} ^{(p)}(\frac{1-x^{-1}}{1-y^{-1}}) +\ell i_{2} ^{(p)}(\frac{1-x}{1-y})=0.
\end{aligned}
\]
Putting $x:=r+r(1-r)t$ and $y:=s+s(1-s)t$ gives the famous 4-term functional equation \cite{kont} of the $1\frac{1}{2}$-logarithm:

\[
\begin{aligned}
\pounds_1(r)-\pounds_1(s)+r^p\pounds_1(\frac{s}{r})+(s-1)^p\pounds_1(\frac{1-r}{1-s})=0.
\end{aligned}
\]

(b) Corresponding to the periodic set of mutations for the $B_{2}$-type cluster algebra given in \cite[Example 3.9]{nak}, we obtain the following relation: 

\[
\begin{aligned}
\pounds_1(r_1)+ 2\frac{1-r_1^p-r_2^p}{1-r_2^p+r_1^pr_2^p} \pounds_1 (r_2(1-r_1))+2r_2^p \pounds _{1}(r_{2}^{-1})+ \frac{r_1 ^p-3}{1-r_1^{-p}(1-r_2^{-p})^2}\pounds_1 (r_1^{-1} (1 - r_2 ^{-1})^2)\\
+2\Big(\frac{(1-r_2)(r_2+(r_2^{-1}-1)(2r_1^{-1}+r_{1}^{-1}r_{2}-1 ) )}{(r_1 ^{-1}(1-r_2)(1-r_2^{-1})+r_2)(1-r_1 ^{-1}(1-r_2)(1-r_2^{-1})-r_2)}\Big)^p\pounds_1 (r_1 ^{-1}(1-r_2)(1-r_2^{-1})+r_2)\\
+\Big(\frac{2r_1^{-1}r_{2}(r_2-1+r_{1}(2-r_1-r_2))+(1-r_1^{-1})(1-r_2+r_1r_2)^2}{r_1^{-1}(1-r_2+r_1r_2)^2(1-r_1^{-1}(1-r_2+r_1r_2)^2)} \Big)^p \pounds_1 (r_1^{-1}(1 - r_2 + r_1 r_2)^2)=0.\\
\end{aligned}
\]
\end{example}

\end{document}